\theoremstyle{plain}
\newtheorem{theorem}{Theorem}[section]
\newtheorem{lemma}[theorem]{Lemma}
\newtheorem{corollary}[theorem]{Corollary}
\newtheorem{proposition}[theorem]{Proposition}
\theoremstyle{definition}
\newtheorem{definition}[theorem]{Definition}
\newtheorem{definition-theorem}[theorem]{Definition-Theorem}
\theoremstyle{remark}
\newtheorem{remark}[theorem]{Remark}
\def\Aut{\mathrm{Aut}}
\def\trace{\mathrm{trace}}
\def\Id{\mathrm{Id}}
\def\z{\mathrm{z}}
\def\Lie{\mathrm{Lie}}
\def\Ad{\mathrm{Ad}}
\def\cF{\mathcal{F}}
\def\cO{\mathcal{O}}
\def\cG{\mathcal{G}}
\def\cB{\mathcal{B}}
\def\cL{\mathcal{L}}
\def\cZ{\mathcal{Z}}
\def\t{\mathfrak{t}}
\def\g{\mathfrak{g}}
\def\R{\mathbb{R}}
\def\P{\mathbb{P}}
\def\C{\mathbb{C}}
\def\bs{{\bf s}}
\def\Om{\Omega}
\def\om{\omega}
\def\>{\rangle}
\def\<{\langle}
\def\>{\rangle}
\def\Ch{\mathrm{Chow}}
\def\cCh{\mathcal{CHOW}}
\def\Sym{\mathrm{Sym}}
\def\tG{\widetilde G}
\def\tT{\widetilde T}
\def\tg{\tilde{\mathfrak{g}}}
\def\tt{\tilde{\mathfrak{t}}}
\begin{document}

\title[Chow stability and optimal weights]
{Relative Chow stability and optimal weights}
\author[C. Tipler]{Carl Tipler}
\address{Universit\'e de Bretagne Occidentale, 6, avenue Victor Le Gorgeu, 29238 Brest Cedex 3 France}
\email{carl.tipler@univ-brest.fr}

\date{\today}

\begin{abstract}
For a polarized K\"ahler manifold $(X,L)$, we show the equivalence between relative balanced embeddings introduced by Mabuchi
and $\sigma$-balanced embeddings introduced by Sano, answering a question of Hashimoto.
We give a GIT characterization of the existence of a $\sigma$-balanced embedding, 
and relate the optimal weight $\sigma$ to the action of $\Aut_0(X,L)$ on the Chow line of $(X,L)$.
\end{abstract}

\maketitle

\section{Introduction}
By definition \cite{c1}, an extremal K\"ahler metric
on a polarized K\"ahler manifold $(X,L)$ is a critical point of the Calabi functional, which
assigns to each K\"ahler metric in $c_1(L)$ the $L^2$-norm of its scalar curvature.
Constant scalar curvature K\"ahler metrics (cscK for short) are special examples of extremal metrics.
Initially stated for cscK metrics, the Yau-Tian-Donaldson conjecture, refined by Sz\'ekelyhidi \cite{sz},
predicts that the existence of an extremal K\"ahler metric on a given polarized K\"ahler manifold $(X,L)$ should be equivalent to a
relative GIT stability notion of $(X,L)$. This conjecture should be seen as an infinite dimensional Kempf-Ness correspondence.\\

In \cite{Don01}, Donaldson introduced a finite dimensional approximation to this Kempf-Ness correspondence, via quantization.
Let $\Aut_0(X,L)$ be the automophism group of $(X,L)$ modulo the $\C^*$-action by rotation on the fibers of $L$.
If this group is discrete, cscK metrics can be approximated by a sequence of balanced metrics. 
Balanced metrics are particular K\"ahler metrics induced by projective embeddings $X\hookrightarrow \P(H(X,L^k)^*)$.
They appear as zeros of a finite dimensional moment map, and their existence correspond to the Chow stability of $(X,L^k)$ \cite{zha,luo,PS03,ps04}.
From the general theory of moment maps, balanced metrics satisfy a unicity property.
As a corollary, one obtains uniqueness of a cscK metric in $c_1(L)$ under the assumption on $\Aut(X,L)$ \cite{Don01}.
The quantization method is a powerful tool in the study of cscK metrics \cite{Don01,Don05},
and it is natural to extend it to the situation $\Aut_0(X,L)\neq 0$.\\

Generalisation of this approximation process to extremal metrics has been pioneered by Mabuchi \cite{ma040,ma04-1,ma04,ma05}. 
His approach can be understood in the framework of relative stability \cite{sz,ah}, 
which naturally appears in GIT in the presence of non-discrete stabilizers.
In the quantization setting, elements of $\Aut_0(X,L)$ acting on the Chow line of $(X,L)$
are an obstruction to Chow stability \cite{ma04-1,ma05,futaki04}.
This obstruction is a source of examples of cscK manifolds that are not asymptotically Chow stable \cite{dz,osy}.
It is then natural to consider a stability notion relatively to a maximal
torus of symmetries $T\subset \Aut_0(X,L)$, so-called Chow polystability relative to $T$ \cite{ma04,ah}.\\

Since then, other notions of quantization of extremal metrics have been introduced.
Extremal metrics can be seen as self-similar solutions to the Calabi flow.
As noticed by Sano, it is also natural to consider Donaldson's iteration process \cite{Don01} as a discretisation
of the Calabi flow. The notion of $\sigma$-balanced metrics, for $\sigma\in\Aut_0(X,L)$,
provides a quantization of extremal metrics by self-similar solutions to Donalsdon's iteration process \cite{s}.
The $\sigma$-balanced metrics appear as zeros of a finite dimensional moment map, and can be used to recover
minimization properties, unicity or splitting results for extremal metrics \cite{st1,st2}. Another point of view is given by
Hashimoto, whose defined a quantization of the extremal vector field \cite{hashimoto1}.
We refer to \cite{hashimoto2} for a good reference on these different notions of quantization.\\

In this note, we show that Mabuchi's relatively balanced metrics and Sano's $\sigma$-balanced metrics are 
equivalent, for a suitable choice of $\sigma$. In fact, for a given maximal compact torus $T\subset \Aut_0(X,L)$,
there is a unique $\sigma \in T^c$ (modulo $T$) allowing the existence of a $\sigma$-balanced metric.
Such an automophism $\sigma$ will be called an optimal weight.

\begin{theorem}
 \label{theo:intro}
 Let $(X,L)$ be a polarized compact K\"ahler manifold and let $T\subset\Aut_0(X,L)$ be a maximal
 compact torus. Then $(X,L)$ is Chow polystable relative to $T$ if and only if it admits a $\sigma$-balanced
 embedding with $\sigma\in T^c$ optimal weight.
\end{theorem}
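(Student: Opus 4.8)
The plan is to prove the theorem by routing through Mabuchi's relatively balanced embeddings, thereby splitting the statement into two equivalences: a relative Kempf--Ness correspondence identifying relative Chow polystability with the existence of a relatively balanced embedding, and the identification --- which is the real content of this note --- of relatively balanced embeddings with $\sigma$-balanced embeddings for $\sigma$ the optimal weight. I would first fix the finite-dimensional picture at each level $k\gg 0$: the Chow point $\hat X_k$ of $(X,L^k)$, the action of $G=\mathrm{SL}(H^0(X,L^k))$ with maximal compact $U$, and the log-norm (Chow) functional $\log\|\cdot\|_{\Ch}$, whose $U$-moment map $\mu$ vanishes precisely at balanced embeddings \cite{zha,luo,ps04}. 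Since $T\subset\Aut_0(X,L)$ acts linearly on $H^0(X,L^k)$, the complexified torus $T^c$ fixes $\hat X_k$ and acts on the Chow line over it through a character whose differential is a fixed element of $\mathfrak{t}^*$, the Chow weight; this fixed weight is what will pin down the optimal $\sigma$.

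The first equivalence is almost tautological once the correct normalization is in place. A relatively balanced embedding is one for which the $\mathfrak{t}^\perp$-component of $\mu$ vanishes, i.e. the balancing operator $M(h)$ (the integral over $X$ of the Bergman endomorphism against the induced Fubini--Study volume) satisfies $M(h)-\tfrac{V}{N_k}\Id\in\mathfrak{t}$, where $N_k=\dim H^0(X,L^k)$ and $V$ is the volume. Given such an embedding I would set $\sigma$ to be the positive Hermitian element of $T^c$ represented, up to the normalizing constant, by $M(h)$: it is positive because $M(h)$ is, it has the correct trace because $\trace M(h)=V$ automatically, and --- crucially --- it lies in $T^c$ rather than in the larger commutant of $\mathfrak{t}$ in $\mathfrak{u}(N_k)$ exactly because $M(h)-\tfrac{V}{N_k}\Id\in\mathfrak{t}$. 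One then checks that $M(h)$ being a multiple of $\sigma$ is precisely Sano's $\sigma$-balanced condition \cite{s}; conversely a $\sigma$-balanced embedding with $\sigma\in T^c$ has balancing operator a multiple of a torus element, hence is relatively balanced.

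The second equivalence, relative Chow polystability $\Leftrightarrow$ existence of a relatively balanced embedding, is the relative Kempf--Ness theorem, which I would establish following Mabuchi and the relative GIT framework of \cite{ma04,ah}. The input is the convexity of $t\mapsto\log\|\exp(t\xi)\cdot\hat X_k\|_{\Ch}$ along one-parameter subgroups $\xi$ transverse to $\mathfrak{t}$, together with the properness of the restricted functional modulo the stabilizer: polystability relative to $T$ is exactly the statement that this functional, descended to $G$-orbits modulo $T^c$, is coercive and attains its minimum, and the minimizer is characterized by the vanishing of $\mu_{\mathfrak{t}^\perp}$. Chaining the two equivalences yields the theorem.

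The main obstacle, and the meaning of \emph{optimal}, is to show that the $\sigma$ produced is unique modulo $T$ and intrinsic to $(X,L)$ --- otherwise ``$\sigma$ optimal'' is ill-posed. Here I would argue that the $\mathfrak{t}$-component $\mu_{\mathfrak{t}}$ of the moment map, and hence $\sigma$, is forced to coincide with the element of $\mathfrak{t}$ dual to the Chow weight of $T^c$ on the Chow line: since $T^c$ fixes $\hat X_k$, this weight is a fixed character, independent of the particular relatively balanced embedding, so any two relatively balanced $h$ yield the same $\sigma$ up to $T$. This is precisely the promised link between the optimal weight and the action of $\Aut_0(X,L)$ on the Chow line. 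The delicate points to control throughout are the compatibility of the level-$k$ constructions (so that the $\Aut_0(X,L)$-equivariance of the embeddings makes the statement intrinsic) and the verification that the minimizer lands in $\mathfrak{t}$ and not merely in the commutant; both are governed by the maximality of $T$ in $\Aut_0(X,L)$.
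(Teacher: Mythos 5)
Your overall architecture matches the paper's: both route through Mabuchi's theorem for the equivalence between relative Chow polystability and relatively balanced embeddings, reduce the theorem to the equivalence between relatively balanced and $\sigma$-balanced embeddings, and pin down the optimal weight via the character of $T^c$ on the Chow line. But the central step, which you call ``almost tautological,'' contains a genuine gap. The relatively balanced condition is a \emph{Lie-algebra} condition: $\mu(\bs)\in\R\,\Id\oplus i\t$. The $\sigma$-balanced condition for $\sigma\in T^c$ is a \emph{group} condition: $\mu(\bs)=c\,(\sigma^*\sigma)^{-1}$, i.e.\ a positive multiple of an element of $\exp(i\t)$. These are different subsets of the positive Hermitian commutant of $T$: if $i\t$ corresponds to a linear subspace $\Lambda$ of block-scalar tuples $(\beta_\chi)$, the affine space $\R\mathbf{1}+\Lambda$ is not carried into itself by $\exp$ or $\log$ (for $\Lambda=\R(1,-1,0)$, the point $(2,\tfrac12,\tfrac54)\in\tfrac54\mathbf{1}+\Lambda$ has componentwise $\log$ outside $\R\mathbf{1}+\Lambda$). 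Consequently your claim that the positive root of $M(h)^{-1}$ lies in $T^c$ ``exactly because $M(h)-\tfrac{V}{N_k}\Id\in\t$'' is false as stated (that hypothesis only puts it in the maximal torus of the commutant, which strictly contains $T^c$ in general), and in the converse direction a $\sigma$-balanced $M(h)$, being a multiple of a torus \emph{element}, is not thereby in $\R\,\Id\oplus i\t$.

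What closes this gap in the paper is precisely the optimality of $\sigma$, via the identity $(\sigma 1\sigma^*\oplus\t^\sigma)=\tt$ of (\ref{eq:t is t sigma}): for $\xi=\sigma\eta\sigma^*$ with $\eta\in\tt$, the pairing $<\mu,\xi>$ is constant on $\cZ^T$ --- this is where $\cF^\sigma=0$ enters --- hence $\iota_{X_\xi}\Om=0$, hence $X_\xi=0$ by non-degeneracy of $\Om$, hence $\xi$ lies in the stabiliser algebra $\tt$, with equality by dimension count. This says that the group element $\sigma\sigma^*$ does lie in the linear space $\tt$, which is exactly the statement your tautology needs and which fails for a general $\sigma\in T^c$. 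Your closing paragraph, identifying $\sigma$ with the dual of the Chow weight (cf.\ Proposition \ref{prop:relation weight character}), gives the right intuition for why the two conditions single out the same point of the commutant, but it does not substitute for this argument: one still has to show that the element of $\R\,\Id\oplus i\t$ dual to the Chow weight and the element $c(\sigma^*\sigma)^{-1}$ of $\R_{>0}\exp(i\t)$ determined by $\cF^\sigma=0$ coincide, which is the same lemma. A secondary slip: Sano's condition makes $M(h)$ a multiple of $(\sigma^*\sigma)^{-1}$, not of $\sigma$.
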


The paper is organized as follows. Section \ref{sec:Chow scheme} is a brief review on Chow stability, 
in relation to balanced embeddings. We introduce necessary notations, definitions and results.
In section \ref{sec:rel Chow}, we define the relative Chow scheme $\cCh_N^T(n,d)$. 
Restricting to the smooth case, $\cCh_N^T(n,d)$ parametrizes $T^c$-invariant subvarieties of $\C\P^N$ of degree $d$ and dimension $n$, 
for $T^c$ a given subtorus of $\Aut(\C\P^N)$. It provides the natural framework for relative Chow stability.
We then prove Theorem \ref{theo:intro} in Section \ref{sec:GIT sigma}. Along the way, we explain how an optimal weight $\sigma$
twists the torus of symmetries $T$ to another torus $T^\sigma$ so that the Chow line of $(X,L)$ becomes $T^\sigma$-invariant.
Finally, we give a closed formula relating the optimal weight $\sigma$
to the $T$-character acting on the Chow line of $(X,L)$ in Section \ref{sec:optimal weight}.

\subsection{Acknowledgments} The author is grateful to Yuji Sano for sharing his ideas
on $\sigma$-balanced metrics. He would like to thank Vestislav Apostolov and Yoshinori Hashimoto for stimulating discussions,
as well as CIRGET and IMPA for their hospitality. 
He benefits from the supports of the French government ``Investissements d'Avenir'' program ANR--11--LABX--0020--01,
and ANR project EMARKS No ANR--14--CE25--0010. 

\section{Chow stability and balanced embeddings}
\label{sec:Chow scheme}
\noindent Let $(X,L)$ be a polarized K\"ahler manifold of complex dimension $n$.
By Kodaira's embedding theorem,
replacing $L$ by a sufficiently large tensor power, the following map defines an embedding:
$$
\begin{array}{cccc}
 \iota : & X & \rightarrow & \P(V^*) \\
  & x & \mapsto & [ev_x],
\end{array}
$$
where $ev_x$ denotes the evaluation map at $x\in X$ and $V:=H^0(X,L)$.
Let $\cB(V)$ be the space of basis of $V$.
For any basis $\bs=\lbrace s_\alpha\rbrace\in \cB(V)$ we define an isomorphism
\begin{equation}
 \label{eq:isomPhis}
\begin{array}{cccc}
 \Phi_\bs: & \P(V^*) &\rightarrow &\C\P^{N}\\
           &  [ \mathrm{ev}]  & \mapsto & [ \mathrm{ev}(s_\alpha)]
\end{array}
\end{equation}
and thus an embedding $f_\bs:=\Phi_\bs\circ \iota$ of $X$ in $\C\P^{N}$, where $N+1=\dim(V)$.
In this section, we will consider a moduli problem for embedded submanifolds of $\C\P^N$
with same degree and dimension as $X_\bs:=f_\bs(X)$.
\subsection{Chow stability}
\noindent The idea behind Chow points and schemes is to replace the embedded submanifold $X_\bs$ by a hypersurface in a higher dimensional projective space.
 Consider $n+1$ copies of the dual projective space $\C\P^{N*}\times \ldots \times \C\P^{N*}$ and the divisor:
 \begin{equation*}
  \label{eq:divisor of X}
  D_{X_\bs}:=\lbrace (H_0,\cdots,H_n)\in \C\P^{N*}\times \ldots \times \C\P^{N*}\:\vert\: H_0\cap \ldots \cap H_n\cap X_\bs\neq \emptyset \rbrace.
 \end{equation*}
 Set 
 $$
 W:=(\Sym^{d}(\C^{N+1}))^{\otimes n+1}.
 $$
 The divisor $D_{X_\bs}$ is defined by an element $\widehat{\Ch}(X_\bs)$ in $W$, up to a constant.
 The corresponding point $\Ch(X_\bs)\in \P(W) $ is called the
 Chow point of $X_\bs$. A remarkable fact is that $X_\bs$ is entirely characterized by $\Ch(X_\bs)$.
Set $d=\int_X c_1^n(L)$ the degree of $X_\bs$ in $\C\P^{N}$.
Then, consider the space of $n$-dimensional subvarieties of $\C\P^{N}$ of degree $d$:
\begin{equation*}
 \label{def:Chow scheme}
 \cCh_{\P^{N}}(n,d):=\lbrace  Y \hookrightarrow \C\P^{N}\: \vert\: \dim(Y)=n,\: \mathrm{degree}(Y)=d  \rbrace.
\end{equation*}
We then have an injective map:
\begin{equation}
 \label{eq:Chow map}
 \begin{array}{cccc}
  \Ch: & \cCh_{\P^{N}}(n,d) & \to & \P(W) \\
       &  (Y\hookrightarrow \C\P^{N}) & \mapsto & \Ch(Y).
 \end{array}
\end{equation}
and the Chow scheme $\cCh_N(n,d)$ is by definition the image of the $\Ch$ map:
$$
\cCh_N(n,d):=\Ch(\cCh_{\P^{N}}(n,d)).
$$
By construction, $\Ch$ is a $1:1$ correspondence between $\cCh_{\P^{N}}(n,d)$ and the Chow scheme $\cCh_N(n,d)$,
and the identification is often implicit in the literature.
\begin{remark}
 In this paper, we restrict ourselves to smooth subvarieties of $\C\P^N$. For moduli considerations,
 one may consider subschemes as well.
\end{remark}
\noindent The $\mathrm{SL}_{N+1}(\C)$-action on $\C^{N+1}$ induces an action on $W$,
and we are interested in the GIT quotient of the Chow scheme under this action.
To simplify notation, set $G^c=\mathrm{SL}_{N+1}(\C)$.
We will restrict ourselves to a single $G^c$-orbit:
$$
\cZ:=G^c\cdot \Ch(X_\bs)
$$
Recall the definition:
\begin{definition}
 The polarized manifold $(X,L)$ is Chow polystable if the $G^c$-orbit of $\widehat\Ch(X_\bs)$ is closed in $W$
 for any $\bs\in\cB(V)$.
\end{definition}
\noindent Note that this definition is independent on the choice of $\bs\in\cB(V)$, as seen below.
We can assume, up to replacing $L$ by a high tensor power, that $\Aut_0(X,L)$ acts on $L$
and thus on $V$ (see e.g. \cite{kob}).
We then consider the induced group representation
$$
	\rho: \mathrm{Aut}_{0}(X,L) \to  \mathrm{SL}(V).
$$
The natural right action of $G^c$ on $\cB(V)$ commutes with the left action of $\C^* \times \Aut_0(X)$ on
$\cB(V)$, where $\C^*$ acts by scalar multiplication.
The space $\cZ$ is then identified with the quotient space
$$
\cZ\simeq \cB(V) / (\C^* \times \Aut_0(X))
$$
via the $\Ch$ map. 
The bundle $\cO_{\P(W)}(1)$ restricts to an ample line bundle $\cL$ on $\cZ$ and $(\cZ,\cL)$ is a smooth polarized K\"ahler manifold.
In the next section, we recall the moment map condition for Chow polystability.

\subsection{Balanced embeddings}
\label{sec:balanced embeddings}
We briefly recall the symplectic aspects related to Chow stability. For more details and proofs we refer to \cite{zha,luo,PS03,ps04}.
Denote by $G=\mathrm{SU}(N+1)$ the compact form of $G^c$.
The line bundle $\cL$ carries a hermitian metric $h$, called Chow metric, whose curvature $\Om$
is a $G$-invariant K\"ahler form on $\cZ$ and such that the $G$-action on $(\cZ,\Om)$ is hamiltonian.
Denote by $\g=\Lie(G)$ the Lie algebra of $G$, identified to its dual 
with the invariant non-degenerate pairing 
$$<\xi,\eta>=\trace(\xi\cdot \eta^*).$$
Denote also by $m_0$ the moment map for the $G$-action on $(\C\P^N,\om_{FS})$:
\begin{equation}
 \label{mmap CPN}
 \begin{array}{cccc}
  m_0 : & \C\P^N &\to &\g \\
               &   \z  & \mapsto & i( \frac{\z^*\cdot \z}{\vert \z \vert^2} )_0
 \end{array}
\end{equation}
where the subscript $0$ stands for the trace-free part.
Then the moment map $\mu_0$ for the $G$-action on $(\cZ,\Om_\cZ)$ reads:
\begin{equation}
 \label{eq:mmap}
 \begin{array}{cccc}
  \mu_0 :& \cZ & \to & \g \\
       &  \bs & \mapsto & \int_{X_\bs}\: m_0\: \om_{FS}^n.
 \end{array}
\end{equation}
\begin{definition}
 A zero of $\mu_0$ is called a balanced embedding.
\end{definition}
\noindent From \cite{zha,luo}, the associated GIT notion is Chow polystability:
\begin{theorem}
 The manifold $(X,L)$ is Chow polystable if and only if it admits a balanced embedding.
\end{theorem}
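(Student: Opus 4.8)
The approach is to realise the statement as an instance of the Kempf--Ness theorem for the linear action of the reductive group $G^c=\mathrm{SL}_{N+1}(\C)$ on the finite dimensional vector space $W$. First I would pin down the symplectic data implicit in the construction above: the Chow metric $h$ on $\cL=\cO_{\P(W)}(1)|_{\cZ}$ is the restriction of the metric on $\cO_{\P(W)}(1)$ induced by a $G$-invariant Hermitian norm $\Vert\,\cdot\,\Vert$ on $W$, so that its curvature $\Om$ is the restriction to $\cZ\simeq G^c\cdot\Ch(X_\bs)$ of the Fubini--Study form of $\P(W)$. With this normalisation, the moment map $\mu_0$ of \eqref{eq:mmap} is precisely the restriction to $\cZ$ of the standard moment map $\widetilde\mu:\P(W)\to\g$ for the induced $G$-action on $(\P(W),\om_{FS})$. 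In particular, a balanced embedding, i.e. a zero of $\mu_0$, is exactly a point of the orbit $\cZ$ at which $\widetilde\mu$ vanishes.

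Next I would introduce the Kempf--Ness functional attached to the chosen lift $\widehat{\Ch}(X_\bs)\in W$:
\[
 \Psi:G^c\to\R,\qquad \Psi(g)=\log\Vert g\cdot\widehat{\Ch}(X_\bs)\Vert^2 .
\]
Because $\Vert\,\cdot\,\Vert$ is $G$-invariant, $\Psi$ descends to the symmetric space $G^c/G$, on which it is convex along the geodesics $t\mapsto\exp(it\xi)g$, $\xi\in\g$. A direct computation of the first variation shows that, at $g\in G^c$, this derivative recovers up to a fixed positive constant the pairing $\langle\mu_0,\xi\rangle$ evaluated at the image of $g\cdot\widehat{\Ch}(X_\bs)$ in $\cZ$. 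Hence $g\cdot\widehat{\Ch}(X_\bs)$ is a critical point of the norm along the orbit if and only if that image is a zero of $\mu_0$, and by geodesic convexity any such critical point is automatically a global minimum of $\Psi$.

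I would then invoke the Kempf--Ness theorem in the following form: for a linear action of a reductive group with maximal compact $G$ and $G$-invariant norm, the $G^c$-orbit of a nonzero vector is closed if and only if the norm attains its infimum on that orbit, if and only if the orbit meets the zero set of the moment map; moreover the zeros in a closed orbit form a single $G$-orbit. Applied to $\widehat{\Ch}(X_\bs)$ and combined with the previous step, this gives: $\cZ=G^c\cdot\Ch(X_\bs)$ contains a zero of $\mu_0$ if and only if $G^c\cdot\widehat{\Ch}(X_\bs)$ is closed in $W$, which is exactly the definition of Chow polystability. This proves the equivalence, and the uniqueness clause yields the balanced embedding up to the action of $G=\mathrm{SU}(N+1)$.

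The hard part is the identification carried out in the first step, namely that the integral $\int_{X_\bs} m_0\,\om_{FS}^n$ is genuinely the Fubini--Study moment map of $\P(W)$ evaluated at the Chow point $\Ch(X_\bs)$. This is the analytic core of the theorem (due to Zhang and Luo): one must differentiate the log-norm of the Chow form $\widehat{\Ch}(X_\bs)$ along one-parameter subgroups of $G^c$ and match the result with the integral of $m_0$ over the embedded manifold, using the description of $\widehat{\Ch}(X_\bs)$ as the defining element of the divisor $D_{X_\bs}$. Once this identification is granted, as it is in the setup above, the only remaining subtlety is the standard but nontrivial Kempf--Ness fact that closedness of the orbit is detected by attainment of the infimum of $\Psi$.
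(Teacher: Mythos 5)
Your outline is correct: this is the standard Kempf--Ness argument for the linear $\mathrm{SL}_{N+1}(\C)$-action on $W$, with the genuine analytic content correctly isolated in the identification of the first variation of $\log\Vert g\cdot\widehat{\Ch}(X_\bs)\Vert^2$ with the pairing of $\int_{X_\bs} m_0\,\om_{FS}^n$ against $\xi$. The paper itself offers no proof of this statement --- it is quoted from Zhang and Luo --- and your route is precisely the one taken in those references (and in Phong--Sturm and Wang), so the proposal matches the intended argument; the only point worth making explicit is that the lifts $\widehat{\Ch}(X_\bs)$ for varying $\bs\in\cB(V)$ differ by scalars within a single $G^c$-orbit, so checking closedness for one basis suffices for all.
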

\noindent If $\Aut_0(X,L)$ is not discrete, there is a caracter on its Lie algebra whose vanishing is a necessary condition to Chow
polystability of $(X,L)$. To introduce this character, we first express the $G^c$-action on $\cL$ by mean of the moment map.
For any $\xi\in\g^c$, denote by $X_\xi$ the infinitesimal action of $\xi$ on $\cZ$. Then the lift of
$X_\xi$ is given by
\begin{equation}
 \label{eq:lifting action}
 \widehat{X_\xi}= X^h_\xi + 2\pi < \mu_0,\xi >  X_1,
\end{equation}
where $X^h_\xi$ is the horizontal lift with respect to the Chern connection of $(\cL,h)$ and where
$X_1$ denotes the vector field generating the $U(1)$ action on the fibers of $(\cL,h)$.
Let $\bs\in \cZ$, $G_\bs$ be its stabiliser in $G$ and $\g_\bs=\Lie(G_\bs)$. Define the map $\cF$:
\begin{equation}
\label{eq:obstruction chow stability}
\begin{array}{cccc}
\cF : & \g_\bs^c & \to & \C\\
&            \xi  & \mapsto & < \mu_0(\bs) , \xi >.
\end{array}
\end{equation}
We refer to \cite{wa} for a proof of the folowing:
\begin{proposition}
For any $\xi\in\g_\bs$, the function 
$$
G^c\ni g \mapsto < \mu_0(g\cdot \bs) , \Ad_g \xi >
$$ is constant.
Moreover $\cF$ defines a character on $\g_\bs^c\simeq \Lie(\Aut_0(X,L))$.
\end{proposition}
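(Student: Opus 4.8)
The plan is to read off both assertions from the lifting formula \eqref{eq:lifting action}, which interprets $\langle\mu_0,\xi\rangle$ as the weight by which $\xi$ acts on the fibres of $(\cL,h)$, together with the following elementary observation: if $\xi\in\g_\bs$, so that $X_\xi(\bs)=0$, then for every $g\in G^c$ the element $\Ad_g\xi\in\g^c$ satisfies $X_{\Ad_g\xi}(g\cdot\bs)=g_\ast X_\xi(\bs)=0$, hence $\Ad_g\xi$ lies in the stabiliser of $g\cdot\bs$. Thus the quantity $\langle\mu_0(g\cdot\bs),\Ad_g\xi\rangle$ to be controlled is always of the form ``moment map paired with a stabilising direction'', which is precisely the situation where \eqref{eq:lifting action} simplifies.

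First I would prove constancy. Fix $\xi\in\g_\bs$ and $g\in G^c$, and set $p=g\cdot\bs$ and $\zeta=\Ad_g\xi$. Since $X_\zeta(p)=0$, the horizontal lift $X^h_\zeta$ vanishes over the fibre $\cL_p$, so \eqref{eq:lifting action} evaluated at $p$ reads $\widehat{X_\zeta}\vert_{\cL_p}=2\pi\langle\mu_0(p),\zeta\rangle\,X_1$. In other words $\zeta$ acts on the one-dimensional space $\cL_p$ by a scalar whose weight is proportional to $\langle\mu_0(g\cdot\bs),\Ad_g\xi\rangle$. Now the $G^c$-linearisation of $\cL$ provides a linear isomorphism $g\colon\cL_\bs\to\cL_{g\cdot\bs}$ intertwining the action of $\exp(t\xi)$ on $\cL_\bs$ with that of $\exp(t\,\Ad_g\xi)=g\,\exp(t\xi)\,g^{-1}$ on $\cL_{g\cdot\bs}$; on a line, conjugation by an isomorphism preserves the scalar by which an operator acts, so the weight of $\Ad_g\xi$ on $\cL_{g\cdot\bs}$ equals the weight of $\xi$ on $\cL_\bs$. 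Comparing with \eqref{eq:lifting action} applied at $\bs$ gives $\langle\mu_0(g\cdot\bs),\Ad_g\xi\rangle=\langle\mu_0(\bs),\xi\rangle$, which is the claimed constancy.

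The character property then follows by specialising the constancy to the compact stabiliser. For $\eta,\xi\in\g_\bs$ take $g=\exp(t\eta)\in G_\bs$; since $\exp(t\eta)$ fixes $\bs$, constancy reads $\langle\mu_0(\bs),\Ad_{\exp(t\eta)}\xi\rangle=\langle\mu_0(\bs),\xi\rangle$ for all $t$, and differentiating at $t=0$ yields $\langle\mu_0(\bs),[\eta,\xi]\rangle=0$, i.e. $\cF$ vanishes on $[\g_\bs,\g_\bs]$. Because the bracket is $\C$-bilinear, $[\g_\bs^c,\g_\bs^c]$ is the complexification of $[\g_\bs,\g_\bs]$, so $\cF\colon\g_\bs^c\to\C$ annihilates every commutator and is therefore a character. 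Finally the identification $\g_\bs^c\simeq\Lie(\Aut_0(X,L))$ is the infinitesimal form of $\cZ\simeq\cB(V)/(\C^\ast\times\Aut_0(X))$: an element of $G^c$ fixing $\Ch(X_\bs)$ must preserve $X_\bs$ as a subvariety and hence arise from $\rho(\Aut_0(X,L))$.

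I expect the main obstacle to be the justification of the conjugation-invariance step at the level of the linearised bundle action, together with checking that \eqref{eq:lifting action} remains valid for the complex element $\Ad_g\xi\in\g^c$ and not only for real $\xi\in\g$; once the weight interpretation of $\langle\mu_0,\cdot\rangle$ is secured, both statements are essentially formal. A purely differential-geometric alternative would differentiate $g\mapsto\langle\mu_0(g\cdot\bs),\Ad_g\xi\rangle$ directly along $G^c$, using the Cartan decomposition $G^c=G\exp(i\g)$, the relation $X_{i\zeta}=JX_\zeta$, and the K\"ahler moment map identities; there the cancellation of the imaginary-direction terms is exactly where the stabiliser condition $X_\xi(\bs)=0$ enters, and controlling it requires the full K\"ahler identities, which the weight argument circumvents.
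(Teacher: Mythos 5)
Your argument is correct, but note that the paper does not actually prove this proposition: it defers entirely to Wang's paper \cite{wa}, so there is no internal proof to compare against. What you give is the standard ``weight'' argument (essentially the one in \cite{wa} and in Donaldson's and Sz\'ekelyhidi's treatments): since $\xi\in\g_\bs$ forces $X_{\Ad_g\xi}(g\cdot\bs)=0$, formula (\ref{eq:lifting action}) identifies $2\pi\<\mu_0(g\cdot\bs),\Ad_g\xi\>$ with the scalar by which $\Ad_g\xi$ acts on the line $\cL_{g\cdot\bs}$, and conjugation by the linearisation isomorphism $g\colon\cL_\bs\to\cL_{g\cdot\bs}$ preserves that scalar; the character property then follows by differentiating the constancy along $G_\bs$ and complexifying. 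This is sound, and your derivation of $\cF([\g_\bs,\g_\bs])=0$ and its extension to $\g_\bs^c$ is fine. The one point you rightly flag, and which deserves to be made explicit rather than left as a worry, is that $\Ad_g\xi$ lies in $\g^c$ rather than $\g$ for general $g\in G^c$, so both the pairing $\<\mu_0,\cdot\>$ and (\ref{eq:lifting action}) must be understood via complex-linear extension in the second slot (the paper's pairing $\trace(\xi\cdot\eta^*)$ is written in a Hermitian form and must be read as $-\trace(\xi\eta)$ on $\su(N+1)$ extended $\C$-bilinearly); with that convention the fibrewise action of $\Ad_g\xi$ at a fixed point is multiplication by the complex scalar $2\pi\<\mu_0,\Ad_g\xi\>$ and your conjugation step goes through verbatim. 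With that caveat addressed, the proof is complete and is the expected one.
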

\begin{remark}
 The character $\cF$ is a quantization of the celebrated Futaki character \cite{fut}.
\end{remark}
\noindent By definition, the existence of a balanced embedding imposes the vanishing of $\cF$.
For any $\xi\in \g_\bs$, $X_\xi$ and its lift $X_\xi^h$ vanish at $\bs$. However,
if $<\mu_0(\bs) , \xi>$ is not zero, the orbit of a lift $x\in\cL_\bs$ under the $G_\bs^c$-action will contain zero,
and the $G^c$-orbit will not be closed. To overcome this issue, following Sz\'ekelyhidi \cite{sz},
one considers a relative notion of stability. In the next section, we introduce a relative Chow scheme
and the associated notion of stability.

\section{Relative Chow stability}
\label{sec:rel Chow}

\noindent In the following, we will restrict ourselves to embedded submanifolds of $\C\P^{N}$ that are invariant under a specified torus of symmetries.

\subsection{Relative embeddings}
\label{sec:rel embed}
Fix a maximal torus $T$ in $\mathrm{Aut}_{0}(X,L)$, and denote its complexification $T^{c}$.
Recall that we have a representation
$$
\rho : \Aut_0(X,L) \to \mathrm{SL}(V).
$$
To simplify notations, we also denote the image of $T^{c}$ under $\rho$ by $T^{c}$.
The action of the complexified torus on
$V$ induces a weight decomposition
$$
V=\bigoplus_{\chi\in w_V(T)} V(\chi)
$$
where $w_V(T)$ is the space of weights for this action.
Let $N^\chi$ be the dimension of $V(\chi)$.
We consider the space of basis adapted to this decomposition:
$$
\cB^T(V):=\left \{ ( s_i^{\chi} )_{\chi \in w(T); i=1..N^\chi} \in (V)^{N+1} \vert \det(s_i^\chi)\neq 0 \text{ and } \forall (\chi, i),\; s_i^\chi\in V(\chi)  \right \}.
$$
Assume now that the embedding $f_\bs=\Phi_\bs\circ\iota$ is given by some $\bs\in\cB^T(V)$. The representation $\rho$ induces via the choice of basis
$\bs$ a representation of $T^c$ on $\C^{N+1}$:
$$
\rho : T^c \to \mathrm{SL}(\C^{N+1})
$$
and a weight decomposition
$$
\C^{N+1}=\bigoplus_{\chi\in w_V(T)} \C^{N_\chi}
$$
where $T^c$ acts with weight $\chi$ on $\C^{N_\chi}$. Note that these representations and weight decompositions
do not depend on $\bs\in\cB^T(V)$, and by construction $X_\bs$ is invariant under the $\rho(T^c)$-action.
Consider now the set of subvarieties of $\C\P^N$ of dimension $n$ and degree $d$ that are invariant under the $T^c$-action:
 \begin{equation*}
 \cCh_{\P^{N}}^T(n,d):=\lbrace  Y \hookrightarrow \C\P^{N}\: \vert\: \dim(Y)=n,\: \mathrm{degree}(Y)=d,\: \rho(T^c)\cdot Y=Y  \rbrace.
\end{equation*}
\begin{definition}
 \label{def:relative Chow scheme}
The relative Chow scheme $\cCh_{N}^T(n,d)$ is defined by:
$$
\cCh_{N}^T(n,d):=\Ch(\cCh_{\P^{N}}^T(n,d)).
$$
\end{definition}
\noindent Alternatively, $\rho$ induces a natural representation of $T^c$ on $\Sym^{d}(\C^{N+1})$, and thus on $W=(\Sym^{d}(\C^{N+1}))^{\otimes n+1}$.
 The relative Chow scheme is then given by the invariant Chow points in
 $\P(W)$.
 Denote the weight decomposition on $W$ by
 $$
W=\bigoplus_{\chi\in w_W(T)} W(\chi).
 $$
By construction, for any $Y\in \cCh_{\P^N}^T(n,d)$, the associated divisor $D_Y$ is $T^c$-invariant
for the diagonal action of $T^c$ on $\C\P^{N*}\times\ldots\times\C\P^{N*}$, and thus
there is a unique weight $\chi_Y\in w_W(T)$ such that $\widehat\Ch(Y)\in W(\chi_Y)$.
Thus the relative Chow scheme lies in the disjoint union
$$
\cCh_N^T(n,d)\subset \bigcup_{\chi\in w_W(T)} \P(W(\chi)),
$$
where we identify $\P(W(\chi))$ with its image in $\P(W)$.
\noindent Consider now the centralizer of $\rho(T^c)$ in $SL_N(\C)$:
$$
G_T^c=S(\Pi_\chi GL_{N^\chi}(\C))
$$
which is the complexification of
\begin{equation}
\label{eq:G}
	G_T:=S(\Pi_\chi U(N^\chi)).
\end{equation}
The $G_T^c$-action on $\C\P^N$ induces a $G_T^c$-action on $\cCh_{\P^N}^T(n,d)$. 
We also have an induced action on $W$, under which the $\Ch$ map is $G_T^c$-equivariant.
The orbit of $\Ch(X_\bs)$ under this action 
corresponds to the set of Chow points:
$$
\Ch(\lbrace \Phi_{\bs\cdot g} \circ \iota(X),\: g\in G_T^c \rbrace)=\Ch(\lbrace \Phi_{\bs} \circ \iota(X),\: \bs\in\cB^T(V) \rbrace)
$$
for the natural right action of $G_T^c$ on $\cB^T(V)$. In the next section, we consider the GIT notions associated to
the $G_T^c$ action on $\cCh_{\P^N}^T(n,d)$.

\subsection{Relative stability}
\label{sec:rel stabi}
We come back to the symplectic picture initiated in Section \ref{sec:balanced embeddings}, following the
abstract setting of \cite{sz}.
The obstruction (\ref{eq:obstruction chow stability}) motivates the following: instead of looking for zeros of $\mu_0$,
try to minimize the function
$$
G^c\ni g \mapsto \vert\vert \mu_0(g\cdot \bs)\vert\vert^2.
$$
A critical point for the square norm of the moment map satisfies the Euler-Lagrange condition $\mu_0(\bs)\in \g_\bs$.
Equivalently, $\mu_0(\bs)$ is equal to its orthogonal projection on the stabiliser of $\bs$.
To have a common stabiliser for any $\bs$, we consider $T$ a maximal compact subtorus of $G_\bs$ as in Section \ref{sec:rel embed},
and restrict ourselves to the $G_T^c$ orbit of $\bs$:
$$
\cZ^T:=G_T^c\cdot \bs.
$$
For any $\bs\in\cZ^T$, the stabilizer of $\bs$ in $G_T$ is $T$. 
Denote by $\t$ the Lie algebra of $T$.
We introduce the extremal vector $\mu_T$ to be the orthogonal projection of $\mu_0(\bs)$ onto $\t$.
One obtains the relative notions of balanced embeddings and Chow stability \cite{ma040,ah}:
\begin{definition}
 An embedding $f_\bs$ is balanced relative to $T$ if $\mu_0(\bs)=\mu_T$.
\end{definition}
\noindent Consider at the level of Lie algebras the orthogonal decompositions
$$
 \g_T=\t\oplus \g_{T^\perp} \:\textrm{ and }\:  \g_T^c=\t^c \oplus \g_{T^\perp}^c.
$$
Denote by $G_{T^\perp}$ and $G_{T^\perp}^c$ the connected Lie subgroups of $G^c$ associated to $\g_{T^\perp}$ and $\g_{T^\perp}^c$.
\begin{definition}
\label{def:rel chow stability}
 The polarized manifold $(X,L)$ is Chow polystable relative to $T$ if the $G_{T^\perp}^c$-orbit of $\widehat\Ch(X_\bs)$ is
 closed in $W$ for any $\bs\in\cB^T(V)$.
\end{definition}
\noindent Mabuchi proved the following theorem \cite{ma040}:
\begin{theorem}
The polarized manifold $(X,L)$ is Chow polystable relative to $T$ if and only if it admits a balanced embedding relative to $T$.
\end{theorem}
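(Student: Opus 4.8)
The plan is to recognize this statement as the relative Kempf--Ness correspondence for the Hamiltonian $G$-space $(\cZ,\Om_\cZ)$ introduced in Section~\ref{sec:balanced embeddings}, restricted to the torus $T$, and to deduce it from the classical (non-relative) Kempf--Ness theorem already recorded above. Working on the orbit $\cZ^T=G_T^c\cdot\bs$, I would first analyse where the moment map $\mu_0$ takes its values. The crucial observation is that, since $\bs\in\cZ^T$ is fixed by the compact torus $T$ and $\mu_0$ is $G$-equivariant, $\mu_0(\bs)$ is fixed by the coadjoint action of $T$, hence lies in the centralizer $\g_T$ of $\t$ in $\g$.

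Using the orthogonal decomposition $\g_T=\t\oplus\g_{T^\perp}$, I would write $\mu_0(\bs)=\mu_T+\mu_{T^\perp}(\bs)$ for the two components. Because the moment map of a subgroup is the orthogonal projection of the ambient moment map onto its Lie algebra, the map $\mu_{T^\perp}:=\pi_{\g_{T^\perp}}\circ\mu_0$ is precisely a moment map for the $G_{T^\perp}$-action on $\cZ^T$. With this identification the condition ``balanced relative to $T$'', namely $\mu_0(\bs)=\mu_T$, becomes exactly $\mu_{T^\perp}(\bs)=0$; that is, relative balanced embeddings are the zeros of the $G_{T^\perp}$-moment map.

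It then remains to invoke the Kempf--Ness theorem for the reductive group $G_{T^\perp}^c$ acting linearly on $W$. Here $G_{T^\perp}^c$ is reductive with compact form $G_{T^\perp}$: since $T$ maps into the center of $G_T=S(\Pi_\chi U(N^\chi))$, the subalgebra $\g_{T^\perp}$ is reductive (the sum of the semisimple part of $\g_T$ with the orthogonal complement of $\t$ in its center), and the corresponding connected subgroup is closed in the compact group $G_T$. By Definition~\ref{def:rel chow stability}, $(X,L)$ is Chow polystable relative to $T$ exactly when the $G_{T^\perp}^c$-orbit of $\widehat\Ch(X_\bs)$ is closed in $W$; the Kempf--Ness theorem identifies closedness of this orbit with the existence of a zero of the associated moment map in the orbit, i.e. a point where $\mu_{T^\perp}$ vanishes. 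Combining with the previous step yields the claimed equivalence.

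The main obstacle I anticipate is the bookkeeping linking the curvature/moment map picture on $\cZ$ with the linear Kempf--Ness picture on $W$: one must check that the restriction of $\mu_0$ to the $T$-fixed locus really does serve as the $G_{T^\perp}$-moment map for the linearization on $W$, via the lift formula (\ref{eq:lifting action}), and that $G_{T^\perp}^c$ is genuinely reductive so that the classical theorem applies verbatim. A secondary point is to ensure the argument is independent of the choice of $\bs\in\cB^T(V)$, which should follow from the $G_T^c$-equivariance of the $\Ch$ map together with the constancy of $g\mapsto<\mu_0(g\cdot\bs),\Ad_g\xi>$ established in the Proposition above.
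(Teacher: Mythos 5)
First, a caveat on the comparison itself: the paper offers no proof of this statement --- it is quoted from Mabuchi \cite{ma040} --- so there is no internal argument to measure yours against. On its merits, your proposal is the standard moment-map/Kempf--Ness route to the relative statement (the point of view of \cite{ah} and of Sz\'ekelyhidi's framework \cite{sz}, which Section \ref{sec:rel stabi} follows), and the reduction is sound: $\mu_0(\bs)\in\g_T$ by equivariance, its $\t$-component is the constant $\mu_T$ by the Proposition of Section \ref{sec:balanced embeddings} (for $g\in G_T^c$ and $\xi\in\t$ one has $\Ad_g\xi=\xi$), so the relatively balanced condition is exactly the vanishing of the $G_{T^\perp}$-moment map $\pi_{\g_{T^\perp}}\circ\mu_0$, and one concludes by Kempf--Ness for $G_{T^\perp}^c$ acting on $W$ equipped with the Chow norm. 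Two of the points you only flag do need to be carried out. (i) Reductivity and closedness of $G_{T^\perp}$: since $\rho(T^c)$ acts by the scalar $\chi$ on each block $\C^{N_\chi}$, $\t$ lies in the centre $\fz$ of $\g_T$, so $\g_{T^\perp}=[\g_T,\g_T]\oplus(\t^\perp\cap\fz)$ is an ideal; closedness of the subtorus generated by $\t^\perp\cap\fz$ requires this subspace to be rational with respect to the cocharacter lattice, which holds because the trace pairing is rational on that lattice and $\t$ is the Lie algebra of a compact torus --- the same rationality issue the paper confronts in Lemma \ref{lem:sigma rationality}. (ii) The identification of $\mu_0$ with the moment map of the linear Kempf--Ness problem on $W$ is precisely the Chow-metric computation of Zhang and Luo \cite{zha,luo}, already recorded in Section \ref{sec:balanced embeddings} as the non-relative case, so you should quote it rather than re-derive it from (\ref{eq:lifting action}). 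Finally, the quantifier ``for any $\bs\in\cB^T(V)$'' in Definition \ref{def:rel chow stability} is harmless because $T^c$ stabilises every point of $\cZ^T$ in $\P(W)$, so $\cZ^T$ is a single $G_{T^\perp}^c$-orbit there and the various lifts $\widehat\Ch(X_\bs)\in W$ differ only by scalars, which do not affect closedness of orbits. With these points made explicit your argument is complete and is, in substance, the accepted proof.
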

\noindent Note that the restriction to the $G_{T^\perp}^c$-orbit is crucial. Indeed, as noticed in Section \ref{sec:rel embed},
and by $T^c$-equivariance, there is a unique $\chi_X\in w_W(T)$ such that
the set $\Ch(\lbrace X_\bs,\: \bs\in \cB^T(V)\rbrace)$ lies in $\P(W(\chi_X))$. By construction,
this character corresponds to the exponential of (a multiple) of the character $\cF$ defined in (\ref{eq:obstruction chow stability}).
Thus, if $\cF\neq 0$, the $T^c$ orbit of $\widehat\Ch(X_\bs)$ will contain zero for any $\bs\in\cB^T(V)$ and
its $G_T^c$-orbit will not be closed. 
\begin{definition}
The destabilizing character of $(X,L)$ is the unique $\chi_X\in w_W(T)$ such that $\Ch(X_\bs)\in\P(W(\chi_X))$ for any $\bs\in\cZ^T$.
\end{definition}
\noindent In the following section, we explain how $\sigma$-balanced
embeddings overcome the issue caused by the destabilizing character, while being equivalent to relatively balanced embeddings.

\section{GIT of $\sigma$-balanced metrics}
\label{sec:GIT sigma}
\noindent An alternative notion of relative balanced embeddings has been introduced by Sano \cite{s}.
A $\sigma$-balanced metric is a self-similar solution to Donaldson's dynamical system,
and provides a quantization of extremal metrics \cite{st1}. 

\subsection{Definition of $\sigma$-balanced embeddings}
\label{sec:def sigma balanced}
We refer to \cite{st2} for a detailed treatment of this section.
Let $\sigma\in T^c$. We also denote by $\sigma$ its image in $G^c_T$ under the representation $\rho$.
From the symplectic point of view, the idea behind $\sigma$-balanced embeddings
is to twist the invariant pairing $<\cdot,\cdot>$ 
by $\sigma \in T^c$ to obtain a moment map orthogonal to the stabiliser $\t$.
That way, the obstruction $\cF$ from (\ref{eq:obstruction chow stability}) vanishes.
Equivalently, one can twist the moment map $\mu_0$ and the symplectic form $\Om$.
On the space $\cZ^T$, consider the map:
\begin{equation}
 \label{eq:definition sigma mmap}
 \begin{array}{cccc}
  \mu_0^\sigma : & \cZ^T & \to & \g_T\\
                 & \bs    & \mapsto &  \int_{X_\bs} \: m_0^\sigma\: \om_{FS}^n,
 \end{array}
\end{equation}
where 
\begin{equation*}
 \begin{array}{cccc}
  m_0^\sigma (\z) = i( \frac{(\sigma\cdot \z)^*(\sigma\cdot \z)}{\vert \z \vert^2} )_0.
 \end{array}
\end{equation*}
\noindent There is a $G_T$ invariant K\"ahler form $\Om^\sigma$ on $\cZ^T$ so that 
$\mu^\sigma_0$ is a moment map for the $G_T$ action on $(\cZ^T,\Om^\sigma)$.
\begin{definition}
\label{def:sigma balanced embedding}
 The embedding $f_\bs: X\to \C\P^N$ is called $\sigma$-balanced if $\mu_0^\sigma(\bs)=0$.
\end{definition}
\noindent The following provides a character on $\t^c$, independent on $\bs\in\cZ^T$, whose vanishing is a necessary condition
for the existence of a $\sigma$-balanced embedding:
\begin{equation}
 \label{eq:obstruction sigma balanced}
 \begin{array}{cccc}
  \cF^\sigma : & \t^c & \to & \C \\
               & \xi &  \mapsto & < \mu^\sigma_0(\bs) , \xi >.
 \end{array}
\end{equation}
\noindent This character is actually the linearisation of a convex fonctional $\cG$ on $T^c$:
\begin{equation}
 \label{eq:convexG}
 \begin{array}{cccc}
  \cG : & T^c & \to & \R\\
        & \sigma & \mapsto & \int_{X_\bs} \frac{\vert\sigma \cdot \z\vert^2}{\vert \z \vert^2}\: \om_{FS}^n.
 \end{array}
\end{equation}
\noindent As a corollary, there is a unique $\sigma\in T^c$, up to $T$, so that $\cF^\sigma=0$. This motivates the following definition:
\begin{definition}
 An element $\sigma\in T^c$ such that $\cF^\sigma=0$ is called optimal weight.
\end{definition}
\noindent In the next section, we investigate the relation between optimal weight an extremal vector $\mu_T$, as well as 
the relation between $\sigma$-balanced embeddings and Chow stability.

\subsection{Twisting the torus}
\label{sec:twisting torus}
To relate $\mu$ and $\mu^\sigma$, we need to consider a bigger group action, and include the homothetic translation
on $L$, or $\C^{N+1}$, in the picture. Consider the $\C^*$ action
on $L$ given by rotations on the fibers. It induces an action on $\cB(V)$ that correspond to the
action by homotheties on $\C^{N+1}$.
Set 
$$
\tG^c=\C^*\times G^c,
$$
with Lie algebra isomorphic to $\mathfrak{gl}_{N+1}(\C) $.
Then the moment map for the action of $\tG=S^1\times G$ on $(\C\P^N,\om_{FS})$ is
\begin{equation*}
 \label{eq:full mmap FS}
  \begin{array}{cccc}
  m : & \C\P^N &\to &\mathfrak{u}(N+1) \\
               &   \z  & \mapsto & i( \frac{\z^*\cdot \z}{\vert \z \vert^2} ).
 \end{array}
\end{equation*}
The $\tG^c$ actions descends to $(\cZ,\cL)$, with moment map given at $\bs$ by integration of $m$ over $X_\bs$.
In the relative situation, set 
\begin{itemize}
 \item $\tT=S^1\times T$, maximal compact subgroup of symmetries in $\Aut(L)$,
 \item $\tT^c$ its complexification,
 \item $\tG_T^c=\C^*\times G_T^c$, centraliser of $\tT$ in $\tG^c$,
 \item $\tG_T=S^1\times G_T$, its compact form,
 \item $\tt$, $\tt^c$, $\tg_T$, $\tg_T^c$ the corresponding Lie algebras.
\end{itemize}
The action of these groups descends to an action on $\cZ^T$. Note that for any $\bs\in\cZ^T$, its stabiliser
is now $\tT^c$. The $\tG_T$-action is hamiltonian for $\Om$ with moment map:
\begin{equation}
  \label{eq:full mmap}
  \begin{array}{cccc}
  \mu : & \cZ^T &\to & \tg_T \\
        &   \bs  & \mapsto & \int_{X_\bs}\: m\: \om_{FS}^n.
 \end{array}
\end{equation}
We fix now an optimal weight $\sigma\in T^c$.
Note that we still have the orthogonal decomposition
\begin{equation}
 \label{eq:ortho decomp extended}
 \tg_T= \tt \overset{\perp}{\oplus} \g_{T^\perp}=  1\overset{\perp}{\oplus}\t\overset{\perp}{\oplus}\g_{T^\perp}.
\end{equation}
Setting $\t^\sigma:=\sigma\cdot \t \cdot \sigma^*$, 
the decomposition (\ref{eq:ortho decomp extended}) induces the decomposition into orthogonal subspaces (actually Lie subalgebras, see below):
\begin{equation}
\label{eq:ortho decomp sigma}
\tg_T=(\sigma 1 \sigma^* \oplus \t^\sigma)\overset{\perp}{\oplus} (\sigma^{-1}\cdot \g_{T^\perp} \cdot(\sigma^{-1})^*).
\end{equation}
Fix now $\xi=\sigma\eta\sigma^*\in (\sigma 1 \sigma^* \oplus \t^\sigma)$. 
The quantity $<\mu, \xi>$ is constant on $\cZ^T$. Indeed, if $\xi\in \t^\sigma$,
$<\mu,\xi>=<\mu^\sigma_0 , \eta >$ as $\eta$ is trace-free. Then $<\mu, \xi>=\cF^\sigma(\eta)=0$, as $\sigma$ is chosen optimal.
For $\xi=\sigma 1 \sigma^*$, $<\mu, \xi>=\cG(\sigma)$, also independent on $\bs$.
Thus the differential of $<\mu, \xi>$ vanishes on $\cZ^T$, and $\iota_{X_\xi}\Om=0$ by definition of a moment map.
The non-degeneracy of $\Om$ forces $X_\xi$ to be zero, and $\xi$ belongs to the Lie algebra of the stabiliser
of $\bs\in\cZ^T$ for any $\bs$, that is $\tt$. We have shown that
\begin{equation}
 \label{eq:t is t sigma}
(\sigma 1 \sigma^* \oplus \t^\sigma)= \tt.
\end{equation}
From (\ref{eq:ortho decomp extended}) and (\ref{eq:ortho decomp sigma}) we deduce for any optimal weight $\sigma\in T^c$
the decomposition:
\begin{equation}
 \label{eq:decomp last}
 \tg_T= (\sigma 1 \sigma^* \oplus \t^\sigma)\overset{\perp}{\oplus} \g_{T^\perp}.
\end{equation}
From this we have:
\begin{theorem}
\label{theo:sigma balanced iff balanced rel T}
Let $\sigma$ be an optimal weight and let $\bs\in\cZ^T$. Then $X_\bs$ is balanced relatively to $T$ if and only if it is $\sigma$-balanced.
\end{theorem}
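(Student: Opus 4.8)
\textit{Proof plan.} The plan is to relate the twisted moment map $\mu_0^\sigma$ to the untwisted moment map $\mu$ of (\ref{eq:full mmap}) through the conjugation $\Psi_\sigma\colon\xi\mapsto\sigma\xi\sigma^{*}$, and then to read off both balancing conditions as the vanishing of one and the same $\g_{T^\perp}$-component. First I would record the pointwise identity $m^\sigma=\sigma\, m\,\sigma^{*}$, where $m^\sigma(\z)=i(\sigma\cdot\z)^{*}(\sigma\cdot\z)/|\z|^{2}$ is the non-tracefree integrand whose tracefree part is the $m_0^\sigma$ of (\ref{eq:definition sigma mmap}). Since $\sigma$ is a constant matrix, integrating over $X_\bs$ against $\om_{FS}^n$ gives the master relation $\mu^\sigma(\bs)=\sigma\,\mu(\bs)\,\sigma^{*}=\Psi_\sigma(\mu(\bs))$, where $\mu^\sigma$ is the non-tracefree version of $\mu_0^\sigma$. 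Here the tracefree part of $\mu^\sigma(\bs)$ is exactly $\mu_0^\sigma(\bs)$, while the trace parts of $\mu$ and $\mu^\sigma$ are multiples of the identity, hence lie in the central summand $1\subset\tt$ and do not interfere with the $\g_{T^\perp}$-bookkeeping below.

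Next I would analyse $\Psi_\sigma$ on $\tg_T$. Since $\sigma\in T^c$ acts by the scalar $\chi(\sigma)$ on the weight block where $T^c$ acts with weight $\chi$, the operator $\Psi_\sigma$ preserves $\tg_T$, scaling that block by the positive factor $|\chi(\sigma)|^{2}$; in particular $\Psi_\sigma$ is self-adjoint and positive-definite for the pairing $\langle\xi,\eta\rangle=\trace(\xi\eta^{*})$. By (\ref{eq:t is t sigma}) it preserves $\tt=\sigma 1\sigma^{*}\oplus\t^\sigma$, and being self-adjoint it therefore preserves the orthogonal complement $\g_{T^\perp}=\tt^{\perp}$ and restricts there to a linear isomorphism; equivalently, comparing (\ref{eq:ortho decomp sigma}) with (\ref{eq:decomp last}) identifies $\sigma^{-1}\g_{T^\perp}(\sigma^{-1})^{*}$ with $\g_{T^\perp}$. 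Applying this to the master relation, the $\g_{T^\perp}$-component of $\mu_0^\sigma(\bs)$ equals $\Psi_\sigma$ applied to the $\g_{T^\perp}$-component of $\mu(\bs)$, which in turn equals the $\g_{T^\perp}$-component of $\mu_0(\bs)$.

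Finally I would read off the two conditions. By definition $X_\bs$ is balanced relative to $T$ if and only if $\mu_0(\bs)=\mu_T$, that is, if and only if the $\g_{T^\perp}$-component of $\mu_0(\bs)$ vanishes. For the $\sigma$-balanced side the decisive input is \emph{optimality}: as $\sigma$ is an optimal weight, $\cF^\sigma=0$ in (\ref{eq:obstruction sigma balanced}), so $\langle\mu_0^\sigma(\bs),\xi\rangle=0$ for all $\xi\in\t$, whence $\mu_0^\sigma(\bs)\in\g_{T^\perp}$; consequently $\mu_0^\sigma(\bs)=0$ is equivalent to the vanishing of its $\g_{T^\perp}$-component alone. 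Combining this with the isomorphism of the previous step, $\mu_0^\sigma(\bs)=0$ holds if and only if the $\g_{T^\perp}$-component of $\mu_0(\bs)$ vanishes, which is exactly the relative balancing condition. I expect this last step to be the main obstacle: without optimality, $\mu_0^\sigma(\bs)=0$ would also force the vanishing of the $\t$-component, a strictly stronger requirement than relative balancing, so the whole equivalence hinges on using $\cF^\sigma=0$ to confine $\mu_0^\sigma(\bs)$ to $\g_{T^\perp}$, after which the positivity of $\Psi_\sigma$ on $\g_{T^\perp}$ makes the two conditions interchangeable.
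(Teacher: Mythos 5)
Your proposal is correct and follows essentially the same route as the paper: both arguments reduce to the identity $\mu^\sigma(\bs)=\sigma\,\mu(\bs)\,\sigma^{*}$ together with the key fact $(\sigma 1\sigma^{*}\oplus\t^{\sigma})=\tt$ of (\ref{eq:t is t sigma}), and both invoke optimality ($\cF^\sigma=0$) at the same spot, namely to show that $\mu_0^\sigma(\bs)=0$ is equivalent to the vanishing of only the $\g_{T^\perp}$-part. Your extra verification that $\Psi_\sigma$ is block-scalar, self-adjoint and positive on $\tg_T$ is a slightly more explicit justification of what the paper encodes in the comparison of (\ref{eq:ortho decomp sigma}) with (\ref{eq:decomp last}), but the substance of the proof is identical.
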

\begin{proof}
 The $\sigma$-balanced condition is equivalent to $\mu_0^\sigma=0$, that is $\sigma\mu\sigma^* =c_\sigma 1$ for some real constant $c_\sigma$.
 By choice of optimal weight, $\forall \xi\in \t$,
$$
\cF^\sigma(\xi)=<\mu^\sigma,\xi>=<\sigma \mu\sigma^*, \xi >=0,
$$
where we use that $\xi$ is trace-free. Thus, $\bs$ is $\sigma$-balanced if and only if $\sigma \mu(\bs)\sigma^*\in (1\oplus \t)$.
From (\ref{eq:t is t sigma}), this is equivalent to $\mu(\bs)\in (1\oplus \t)$,
that is $\bs$ being balanced relative to $T$.
\end{proof}
\noindent As a corollary, we recover Theorem \ref{theo:intro}:
\begin{corollary}
 $(X,L)$ admits a $\sigma$-balanced embedding if and only if it is Chow polystable relative to $T$.
\end{corollary}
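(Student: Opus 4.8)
The plan is to deduce the corollary by composing Theorem~\ref{theo:sigma balanced iff balanced rel T} with Mabuchi's theorem on embeddings balanced relative to $T$. As in Theorem~\ref{theo:intro}, the automorphism $\sigma\in T^c$ in the statement is understood to be an optimal weight, and I would begin by recalling that such a $\sigma$ exists and is unique modulo $T$: the character $\cF^\sigma$ of (\ref{eq:obstruction sigma balanced}) is the differential of the convex functional $\cG$ on $T^c$ defined in (\ref{eq:convexG}), so optimal weights are exactly the critical points of $\cG$, and strict convexity in the directions transverse to $T$ yields existence and uniqueness modulo the compact torus. This makes the phrase ``$(X,L)$ admits a $\sigma$-balanced embedding with $\sigma$ optimal'' unambiguous.

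Next I would unwind the definitions so the two equivalences line up over a common family of bases. By Definition~\ref{def:sigma balanced embedding}, $(X,L)$ admits a $\sigma$-balanced embedding precisely when there is some $\bs\in\cB^T(V)$, equivalently a point $\bs\in\cZ^T$, with $\mu_0^\sigma(\bs)=0$. Fixing the optimal weight $\sigma$, Theorem~\ref{theo:sigma balanced iff balanced rel T} asserts that for any such $\bs$ the embedding $X_\bs$ is $\sigma$-balanced if and only if it is balanced relative to $T$. Hence $(X,L)$ admits a $\sigma$-balanced embedding if and only if it admits an embedding balanced relative to $T$. Mabuchi's theorem then identifies the latter condition with Chow polystability of $(X,L)$ relative to $T$, and chaining the equivalences gives the corollary.

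All the genuine content is already carried by the twisting identity (\ref{eq:t is t sigma}) underlying Theorem~\ref{theo:sigma balanced iff balanced rel T}, whereby the optimal $\sigma$ conjugates the splitting $\tt=1\oplus\t$ into $\sigma 1\sigma^*\oplus\t^\sigma$, so that $\sigma\mu\sigma^*$ lies in $1\oplus\t$ exactly when $\mu$ does. I therefore do not expect a real obstacle in the corollary itself. The only point requiring care is the matching of quantifiers: the relative balanced and the $\sigma$-balanced notions both ask for the existence of a single representative in the orbit $\cZ^T$, and since both statements quantify over this same orbit, the equivalence furnished by Theorem~\ref{theo:sigma balanced iff balanced rel T} transfers the existence directly, with no loss.
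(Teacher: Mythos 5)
Your proof is correct and follows the same route the paper intends: the corollary is stated as an immediate consequence of Theorem~\ref{theo:sigma balanced iff balanced rel T} chained with Mabuchi's theorem identifying relative balanced embeddings with relative Chow polystability, which is exactly what you do. Your added remarks on existence and uniqueness of the optimal weight via convexity of $\cG$ and on the matching of quantifiers over $\cZ^T$ are consistent with the paper's Section~\ref{sec:def sigma balanced} and do not change the argument.
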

\noindent We now relate the choice of optimal weight $\sigma\in T^c$ to the relative Chow scheme. From now on, we shall assume that
the restriction of the pairing $<\cdot,\cdot>$ to $\tt$ is normalized to be rational on the kernel of the exponential map.
\begin{lemma}
\label{lem:sigma rationality}
 For $\sigma$ an optimal weight, $(\t^\sigma)^c$ is the Lie algebra of an algebraic torus in $\tG_T^c$.
\end{lemma}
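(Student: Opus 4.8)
The plan is to reduce the statement to a single rationality assertion and then invoke the standard dictionary between rational subspaces of the Lie algebra of a compact torus and the algebraic subtori of its complexification. First I would record, using (\ref{eq:t is t sigma}), that $\t^\sigma\subset\tt=\Lie(\tT)$, so that $(\t^\sigma)^c\subset\Lie(\tT^c)$; since $\tT^c$ is an algebraic torus, it suffices to prove that $\t^\sigma$ is a rational subspace with respect to the cocharacter lattice $\Lambda=\ker(\exp\colon\tt\to\tT)$. Indeed, once $\t^\sigma$ is rational, $(\t^\sigma)^c$ is the Lie algebra of the subtorus of $\tT^c\subset\tG_T^c$ cut out by the saturation of $\Lambda\cap\t^\sigma$.

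Next I would bring in the relevant rational datum, namely the character $\ell\in\tt^*$ by which $\tT$ acts on the Chow line. As recalled in Section~\ref{sec:twisting torus}, every $\bs\in\cZ^T$ is fixed by $\tT$, so $\widehat\Ch(X_\bs)$ spans a $\tT$-invariant complex line $\C\cdot\widehat\Ch(X_\bs)\subset W(\chi_X)$ on which $\tT$ acts through an integral character; I take $\ell$ to be its differential. Applying the lifting formula (\ref{eq:lifting action}) to the $\tG$-action gives, on $\tt$, the identity $\ell=2\pi\langle\mu(\bs),\cdot\rangle$. This $\ell$ is nonzero: its value on the generator $1$ of $S^1$ is a positive multiple of the degree of $X_\bs$. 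Because $\langle\cdot,\cdot\rangle$ has been normalized to be rational on $\Lambda$, the integral functional $\ell$ is represented by a rational vector $\fM\in\tt$ via $\ell=\langle\fM,\cdot\rangle$, so that $\ker\ell=\{\xi\in\tt:\langle\fM,\xi\rangle=0\}$ is a rational hyperplane of $\tt$.

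The heart of the argument is then the identification $\t^\sigma=\ker\ell$, and this is exactly where optimality enters. Fixing $\eta\in\t$ and setting $\xi=\sigma\eta\sigma^*\in\t^\sigma$, I would use that $\sigma\in T^c$ acts diagonally for the weight decomposition of $\C^{N+1}$, so that $\zeta\mapsto\sigma\zeta\sigma^*$ is self-adjoint for $\langle\cdot,\cdot\rangle$, and compute
$$
\ell(\xi)=2\pi\langle\mu(\bs),\sigma\eta\sigma^*\rangle=2\pi\langle\sigma\mu(\bs)\sigma^*,\eta\rangle .
$$
Since $\eta$ is trace-free this equals $2\pi\langle\mu_0^\sigma(\bs),\eta\rangle=2\pi\,\cF^\sigma(\eta)$, which vanishes because $\sigma$ is an optimal weight. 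Hence $\t^\sigma\subset\ker\ell$, and a dimension count $\dim\t^\sigma=\dim\t=\dim\tt-1=\dim\ker\ell$ forces equality. As $\ker\ell$ is rational, the lemma follows.

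The difficulty to anticipate is conceptual rather than computational. The optimal weight $\sigma$ solves the transcendental equation $\cF^\sigma=0$ (equivalently, it minimizes the convex functional $\cG$), so its entries are transcendental in general and the conjugate $\t^\sigma=\sigma\t\sigma^*$ carries no manifest rational structure. The whole point is to realize that the optimality condition is precisely what pins $\t^\sigma$ down as the kernel of the integral Chow character $\ell$; once that identification is secured, rationality — and hence algebraicity of the twisted torus — is automatic.
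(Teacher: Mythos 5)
Your proof is correct and rests on the same underlying mechanism as the paper's, but you package it more directly. The paper projects $\mu$ onto $\tt$ to get the constant vector $c1+\mu_T$, cites Sz\'ekelyhidi's rationality lemma to see that this vector generates an algebraic subgroup, and then transfers rationality to $\t^\sigma$ through the identification $\sigma^{-1}c_\sigma 1(\sigma^{-1})^*=c1+\mu_T$. You instead characterize $\t^\sigma$ outright as the kernel in $\tt$ of the functional $\langle\mu(\bs),\cdot\rangle$ --- the inclusion is exactly the computation already carried out in Section \ref{sec:twisting torus} using $\cF^\sigma=0$, and equality follows from your dimension count since the functional is nonzero on the $S^1$-direction --- and you obtain rationality of that kernel by recognizing the functional, via (\ref{eq:lifting action}), as (a multiple of) the differential of the integral character by which $\tT$ acts on the Chow line. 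This is precisely the content of the cited lemma of Sz\'ekelyhidi, and of the paper's own Proposition \ref{prop:relation weight character}, so in effect you have inlined its proof; what you gain is a self-contained argument that bypasses the paper's somewhat opaque intermediate identification. Two minor points: the kernel of a nonzero integral character is automatically a rational subspace, so the normalization of the pairing is not actually needed at that step; and the self-adjointness of $\zeta\mapsto\sigma\zeta\sigma^*$ for $\trace(\xi\eta^*)$ requires $\sigma$ self-adjoint, which one may assume since the optimal weight is unique modulo $T$ --- a convention issue already present in the paper's own manipulations and harmless here.
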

\begin{proof}
 It is clear that $\t^\sigma$ is an abelian Lie algebra, and generates a compact analytic subgroup of $\tG_T$.
 Remains to show that it generates an algebraic group.
 The projection of $\mu$ to $\tt$ doesn't depend on $\bs$, and equals
 $c 1 + \mu_T$ for a constant $c$. By \cite[Lemma 3.3]{sz}, $c 1+\mu_T$ generates an algebraic subgroup of $\tG_T$.
 Note that the proof of \cite[Lemma 3.3]{sz} can be adapted directly to our context, without assuming
 the existence of a solution to $\mu(\bs)=c 1 + \mu_T$, because $\tT$ stabilizes all elements $\bs\in\cZ^T$.
 But then $\sigma^{-1} c_\sigma 1 (\sigma^{-1})^*=c1 + \mu_T$ 
 (see proof of Theorem \ref{theo:sigma balanced iff balanced rel T}), so that $\sigma^{-1} c_\sigma 1 (\sigma^{-1})^*$ also generates
 an algebraic subgroup. We conclude that $\sigma (c_\sigma^{-1}) 1 \sigma^*$ satisfies the rationality condition that ensures that
 $\t^\sigma ( = c_\sigma^{-1}\cdot \t^\sigma )$ generates an algebraic torus in $\tG_T^c$.
\end{proof}
\noindent Let's denote by $T^\sigma$ the algebraic torus in $\tG^c_T$ generated by $\t^\sigma$.
Note that any $X_\bs$, $\bs\in \cZ^T$, is $T^\sigma$-invariant, so that 
$\cZ^T$ embeds into the relative Chow scheme $\cCh^{T^\sigma}_N(n,d)$ via the $\Ch$ map.
As in Section \ref{sec:rel Chow}, there is a weight decomposition under the $T^\sigma$-action:
$$
W=(\Sym^{d}(\C^{N+1}))^{\otimes n+1}=\bigoplus_{\chi\in w_W(T^\sigma)} W(\chi)
$$
and $\cZ^T$ is embedded in $\P(W(\chi'))$ for a unique $\chi'\in w_W(T^\sigma)$.
\begin{lemma}
 The character $\chi'$ is trivial.
\end{lemma}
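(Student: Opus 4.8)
The plan is to identify the character $\chi'$ with the linearization of the $T^\sigma$-action on the Chow line, exactly as the destabilizing character $\chi_X$ was identified with the exponential of a multiple of $\cF$ in Section \ref{sec:rel Chow}, and then to conclude from the optimality of $\sigma$. First I would note that, by (\ref{eq:t is t sigma}), $\t^\sigma\subset\tt$, so the torus $T^\sigma$ is contained in the stabiliser $\tT^c$ of every $\bs\in\cZ^T$. In particular $T^\sigma$ fixes the Chow point $\Ch(X_\bs)\in\P(W)$, and $\chi'$ is precisely the weight by which $T^\sigma$ acts on the line $\C\cdot\widehat{\Ch}(X_\bs)$, i.e. the unique weight with $\widehat{\Ch}(X_\bs)\in W(\chi')$.

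Next I would compute this weight through the moment map. Since the horizontal lift $X^h_\xi$ vanishes at the fixed point $\bs$, the lifting formula (\ref{eq:lifting action}) shows that the infinitesimal action of $\xi\in\tt$ on the Chow line is multiplication by a fixed multiple of $\langle\mu(\bs),\xi\rangle$, where $\mu$ is the full moment map (\ref{eq:full mmap}). By $\tG_T$-equivariance of the Chow embedding and the fact that $\Om$ is the curvature of the Chow metric, the derivative $d\chi'$ is therefore the corresponding multiple of $\xi\mapsto\langle\mu(\bs),\xi\rangle$ restricted to $\t^\sigma$. This parallels exactly the identification of $\chi_X$ with the exponential of a multiple of $\cF$.

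Finally I would invoke the computation already carried out in Section \ref{sec:twisting torus}: for $\xi=\sigma\eta\sigma^*\in\t^\sigma$ one has $\langle\mu(\bs),\xi\rangle=\langle\mu_0^\sigma(\bs),\eta\rangle=\cF^\sigma(\eta)$, which vanishes because $\sigma$ is an optimal weight. Hence $d\chi'=0$ on $\t^\sigma$, and since $T^\sigma$ is connected, $\chi'$ is trivial.

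The main obstacle I anticipate is the bookkeeping in the second step: confirming that the weight of $T^\sigma$ on the vector $\widehat{\Ch}(X_\bs)\in W$, rather than on the fiber of $\cO_{\P(W)}(\pm 1)$, is computed by $\langle\mu,\cdot\rangle$ with the correct sign and normalisation, and that the relevant moment map is the full $\mu$ of (\ref{eq:full mmap}) and not $\mu_0$ or $\mu_0^\sigma$. Once this identification is pinned down, triviality of $\chi'$ follows at once from the vanishing $\cF^\sigma\equiv 0$ guaranteed by the optimality of $\sigma$.
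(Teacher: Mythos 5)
Your proposal is correct and follows essentially the same route as the paper: the paper's proof also observes that optimality of $\sigma$ makes $\mu$ orthogonal to $\t^\sigma$ (via the computation $\<\mu,\sigma\eta\sigma^*\>=\cF^\sigma(\eta)=0$ from Section \ref{sec:twisting torus}) and then reads off from the lifting formula (\ref{eq:lifting action}) that the linearisation of the $T^\sigma$-action on the Chow line is trivial. Your version merely spells out the bookkeeping that the paper leaves implicit.
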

\begin{proof}
As $\sigma$ is an optimal weight, $\mu$ is orthogonal to the Lie algebra $\t^\sigma$.
From (\ref{eq:lifting action}), we see that the linearisation of the action of $T^\sigma$ is trivial,
that is $\chi'=0$.
\end{proof} 
\noindent Let $G_\sigma^c$ be the connected Lie subgroup of $\tG_T^c$ generated by $(\t^\sigma)^c\oplus \g_{T^\perp}^c$.
We then have shown:
\begin{theorem}
 The manifold $(X,L)$ admits a $\sigma$-balanced embedding if and only if the $G_\sigma^c$-orbit of $\widehat \Ch(X_\bs)$
 is closed for any $\bs\in \cB^T(V)$.
\end{theorem}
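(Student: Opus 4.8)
The plan is to reduce the statement to the relative Chow polystability criterion of Definition \ref{def:rel chow stability} through the chain of equivalences already assembled in this section. By Theorem \ref{theo:sigma balanced iff balanced rel T}, the existence of a $\sigma$-balanced embedding is equivalent to the existence of an embedding balanced relative to $T$; by Mabuchi's theorem recalled above this is in turn equivalent to $(X,L)$ being Chow polystable relative to $T$, that is, to the closedness of the $G_{T^\perp}^c$-orbit of $\widehat\Ch(X_\bs)$ in $W$ for every $\bs\in\cB^T(V)$. Hence the only genuinely new point is that, for an optimal weight $\sigma$, the $G_\sigma^c$-orbit of $\widehat\Ch(X_\bs)$ is closed if and only if its $G_{T^\perp}^c$-orbit is. I would establish this by showing that the two orbits coincide as subsets of $W$.

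First I would record that $(\t^\sigma)^c$ and $\g_{T^\perp}^c$ commute. Indeed, by (\ref{eq:t is t sigma}) we have $\t^\sigma\subset\tt=1\oplus\t$, while $\g_{T^\perp}\subset\g_T$ is the centraliser of $T$; since the homothety direction $1$ is central in $\tg_T$ and $\t$ centralises $\g_{T^\perp}$, the subalgebra $(\t^\sigma)^c$ commutes with $\g_{T^\perp}^c$. Consequently the connected group $G_\sigma^c$ generated by $(\t^\sigma)^c\oplus\g_{T^\perp}^c$ factors as the product $T^\sigma\cdot G_{T^\perp}^c$ of two commuting subgroups.

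Next I would use the preceding lemma, which shows that the character $\chi'$ of $T^\sigma$ on the component of $W$ containing the Chow points is trivial, so that $T^\sigma$ fixes the vector $\widehat\Ch(X_\bs)\in W$ (not merely its projective class). Using the commutativity just established, for any $g\in G_{T^\perp}^c$ and $t\in T^\sigma$ one gets $t\cdot(g\cdot\widehat\Ch(X_\bs))=g\cdot(t\cdot\widehat\Ch(X_\bs))=g\cdot\widehat\Ch(X_\bs)$, so $T^\sigma$ fixes the entire orbit $G_{T^\perp}^c\cdot\widehat\Ch(X_\bs)$ pointwise. Combined with the factorisation $G_\sigma^c=T^\sigma\cdot G_{T^\perp}^c$, this yields $G_\sigma^c\cdot\widehat\Ch(X_\bs)=G_{T^\perp}^c\cdot\widehat\Ch(X_\bs)$ as subsets of $W$, so closedness of one orbit is equivalent to closedness of the other, and the theorem follows from the chain of equivalences above.

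The main obstacle is making the orbit-coincidence step rigorous: one must check that $T^\sigma$ fixes the Chow points as honest vectors in the affine cone $W$, and that it does so uniformly along the whole $G_{T^\perp}^c$-orbit rather than just at the base point. Both are guaranteed by the triviality of $\chi'$ together with the commutativity $[(\t^\sigma)^c,\g_{T^\perp}^c]=0$, which forces the $T^\sigma$-weight to stay constant along the orbit. A minor point to verify is that the algebraic-torus structure on $T^\sigma$ from Lemma \ref{lem:sigma rationality} is precisely what makes $G_\sigma^c$ an algebraic, and not merely analytic, subgroup of $\tG_T^c$, so that the GIT notion of closed orbit is the appropriate one.
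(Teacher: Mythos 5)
Your argument is correct and follows exactly the route the paper intends: the paper states this theorem with ``We then have shown,'' leaving implicit precisely the chain you spell out (Theorem \ref{theo:sigma balanced iff balanced rel T} plus Mabuchi's theorem to reduce to relative Chow polystability, then the triviality of $\chi'$ together with the fact that $(\t^\sigma)^c\subset\tt^c$ commutes with $\g_{T^\perp}^c$ to identify the $G_\sigma^c$-orbit of $\widehat\Ch(X_\bs)$ with its $G_{T^\perp}^c$-orbit). Your explicit verification that $T^\sigma$ fixes the Chow point as a vector in $W(0)$ and hence the whole commuting orbit pointwise is exactly the missing detail, correctly supplied.
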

\noindent The optimal weight provide a relative setting where the only destabilizing symmetry
is generated by $\sigma^{-1} 1 (\sigma^{-1})^*$.
\subsection{Optimal weight and destabilizing character}
\label{sec:optimal weight}
Recall from Section \ref{sec:rel embed} that there is a unique $\chi_X\in w_W(T)$ such that
the set $\Ch(\lbrace X_\bs,\: \bs\in \cB^T(V)\rbrace$ lies in $\P(W(\chi_X))$. In this short section, we give the precise relation
between $\chi_X$ and optimal weight $\sigma$. For this, we will restrict our attention to the $\tT^c$ action on $W$.

\noindent First notice that there is a $1 - 1$ correspondence between the weights $w_W(\tT)$ of the $\tT^c$-action
and the weights of the $T^c$-action on $W$ given by
\begin{equation*}
 \begin{array}{ccc}
  w_W(T) & \rightarrow & w_W(\tT) \\
   \chi  &  \mapsto    &  \chi': (\lambda, t) \mapsto \lambda^{d(n+1)}\cdot\chi(t)
 \end{array}
\end{equation*}
where $(\lambda,t)\in \C^*\times T^c$.
In particular, the space $W$ decomposes into the same direct sum of invariant subspaces under the action of these two tori,
and $\Ch(X_\bs)\in \P(W(\chi_X'))$ for any $\bs\in\cB^T(V)$.

\noindent Let $\sigma$ be an optimal weight and set 
$$
\alpha^\sigma:= i \sigma^{-1} \cdot \cG(\sigma) \Id \cdot (\sigma^{-1})^*.
$$
\begin{proposition}
\label{prop:relation weight character}
The relation between the optimal weight $\sigma$ and the destabilizing character of $(X,L)$ is
given by $$\alpha^\sigma=\chi'_X.$$
\end{proposition}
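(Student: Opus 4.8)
The plan is to view the destabilizing weight $\chi'_X$ as an element of $\tt$ through the pairing $\<\cdot,\cdot\>$ and to match it, frame by frame, against $\alpha^\sigma$.

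First I would read off $\chi'_X$ from the moment map. For $\bs\in\cZ^T$ the torus $\tT^c$ fixes $\bs$, hence fixes $\Ch(X_\bs)$ and acts on the Chow line $\C\widehat\Ch(X_\bs)=\cO_{\P(W)}(-1)_{\Ch(X_\bs)}$ through the character $\chi'_X$. Applying the lift (\ref{eq:lifting action}) to the extended moment map $\mu$ of (\ref{eq:full mmap}): for $\xi\in\tt$ both $X_\xi$ and $X_\xi^h$ vanish at $\bs$, so the infinitesimal action on the fibre reduces to the vertical term $2\pi\<\mu(\bs),\xi\>X_1$. Dualizing $\cO(1)=\cO(-1)^*$ then identifies $\chi'_X$, as a linear functional on $\tt$, with a fixed normalization of $\mathrm{pr}_{\tt}\mu$, the orthogonal projection of $\mu$ onto $\tt$; in particular $\chi'_X$ is completely encoded by the numbers $\<\mu,\xi\>$, $\xi\in\tt$.

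Next I would evaluate these numbers in the twisted frame. By (\ref{eq:t is t sigma}) one has $\tt=\sigma 1\sigma^*\oplus\t^\sigma$, and optimality of $\sigma$ gives $\<\mu,\sigma\eta\sigma^*\>=\cF^\sigma(\eta)=0$ for $\eta\in\t$, together with $\<\mu,\sigma 1\sigma^*\>=\cG(\sigma)$. Thus $\mathrm{pr}_{\tt}\mu$ is orthogonal to $\t^\sigma$ and pairs to $\cG(\sigma)$ against $\sigma 1\sigma^*$. On the other hand the identity obtained in the proofs of Theorem \ref{theo:sigma balanced iff balanced rel T} and Lemma \ref{lem:sigma rationality} writes $\mathrm{pr}_{\tt}\mu=\sigma^{-1}c_\sigma 1(\sigma^{-1})^*$; pairing this against $\sigma 1\sigma^*$ and using $\<\sigma^{-1}1(\sigma^{-1})^*,\sigma 1\sigma^*\>=\trace\Id=N+1$ forces $c_\sigma=\cG(\sigma)/(N+1)$. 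Substituting and recalling that $1=i\Id$ exhibits $\mathrm{pr}_{\tt}\mu$ as a universal multiple of $\sigma^{-1}\cG(\sigma)(i\Id)(\sigma^{-1})^*=\alpha^\sigma$. The conceptual content of this step is the duality of the two twisted frames: $\mu$ pairs transparently against $\{\sigma 1\sigma^*,\t^\sigma\}$, whereas $\mathrm{pr}_{\tt}\mu$ and $\alpha^\sigma$ live in the dual frame, along $\sigma^{-1}1(\sigma^{-1})^*$.

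Finally I would fix the single remaining scalar and thereby upgrade the proportionalities $\chi'_X\propto\mathrm{pr}_{\tt}\mu\propto\alpha^\sigma$ to the asserted equality. This is calibrated on the homothety direction: the correspondence $\chi\mapsto\chi'$ with $\chi'(\lambda,t)=\lambda^{d(n+1)}\chi(t)$ records that $\C^*$ acts on $\widehat\Ch(X_\bs)$ with weight $d(n+1)$, which determines the Fubini--Study volume factor $\int_{X_\bs}\om_{FS}^n$ and the lattice normalization of $\<\cdot,\cdot\>$ on $\tt$ (rational on $\ker\exp$) entering the lift. The main obstacle is exactly this reconciliation of constants---the volume, the factor $2\pi$ in (\ref{eq:lifting action}), and the normalization of the pairing---which must be checked to combine, through the $d(n+1)$ homothety weight, into the precise coefficient of $\alpha^\sigma$. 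Once verified on the generator $\sigma 1\sigma^*$ (the $\t^\sigma$-pairings already vanishing on both sides), the equality $\alpha^\sigma=\chi'_X$ follows.
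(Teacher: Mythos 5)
Your argument is correct and follows essentially the same route as the paper: both identify the destabilizing character $\chi'_X$ with the $\tt$-projection of $\mu$ via the lifting formula over the $\tT^c$-fixed locus $\cZ^T$, and then use optimality of $\sigma$ together with the decomposition $\tt=\sigma 1\sigma^*\oplus\t^\sigma$ from (\ref{eq:t is t sigma}) to recognize this projection as $\alpha^\sigma$. The only difference is one of packaging: the paper first invokes the rationality of $\alpha^\sigma=\chi^\sigma/n^\sigma$ from Lemma \ref{lem:sigma rationality} and then phrases the fiber-weight computation as a GIT statement (twisting $\cL^{n^\sigma}$ by the trivial bundle $\C^\sigma$ linearized by $(\chi^\sigma)^{-1}$, whose moment map vanishes identically, forcing the weight $(\chi'_X)^{n^\sigma}(\chi^\sigma)^{-1}$ to be trivial), and, like you, it lets the normalization of the pairing on $\tt$ absorb the residual constants you flag in your final step.
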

\begin{proof}
From the proof of Lemma \ref{lem:sigma rationality}, $\alpha^\sigma$ is rationnal, and thus can be written
$$
\alpha^\sigma = \frac{\chi^\sigma}{n^\sigma},
$$
for $\chi^\sigma$ a character in $Hom(\tT^c,\C^*)$ (by using derivation, we identify the characters and elements in $(\tt^c)^*$).
Let $\C^{\sigma}$ be the trivial line bundle over $\cZ^T$, with $\tT^c$-linearisation given by $(\chi^\sigma)^{-1}$.
Recall from the discussion of Section \ref{sec:twisting torus} that $\alpha^\sigma$ is the orthogonal projection $\pi_{\tt}\mu$
of $\mu$ to $\tt$, independently on $\bs\in\cZ^T$.  Then by construction, 
the moment map for the $\tT^c$-action on $(\cZ^T,\cL^{n^\sigma}\otimes\C^{\sigma})$ is nothing but 
$$
n^\sigma \pi_{\tt} \mu - \chi^\sigma=0.
$$
Thus the points in $\cZ^T$ are semi-stable with respect to the $\tT^c$-action on $\cL^{n^\sigma}\otimes\C^\sigma$.
This action must then be trivial. But $\tT^c$ acts on $\cL^{n^\sigma}\otimes\C^\sigma$ with weight
$$(\chi'_X)^{n^\sigma}\cdot(\chi^\sigma)^{-1}.$$
We conclude that $\alpha^\sigma=\chi'_X$.
\end{proof}
\begin{remark}
 Note that from Proposition \ref{prop:relation weight character},  the extremal vector $\alpha^\sigma$
 only depends on the connected component of $\cCh^T_{N}(n,d)$ where the Chow line of $(X,L)$ lies.
 In particular, it is invariant under embedded $T$-equivariant complex deformations of $(X,L)$.
 Considering tensor powers $L^k$ of $L$, and letting $k$ go to infinity, 
 the extremal vectors $\alpha^{\sigma_k}$ converge to the extremal vector field of $(X,L)$ with respect to $T$ (see \cite{st2}).
 One recovers the fact that the extremal vector field is invariant under polarized $T$-equivariant complex deformations \cite{lej}.
\end{remark}

\end{document}